\newtheorem{theorem}{Theorem}
\theoremstyle{plain}
\newtheorem{definition}[theorem]{Definition}
\newtheorem{remark}[theorem]{Remark}
\numberwithin{equation}{section} \numberwithin{theorem}{section}
\newcommand\bel[1]{\begin{equation}\label{#1}}
\newcommand\ee{\end{equation}}
\newcommand\bE{{\mathbb{E}}}
\newcommand\bR{{\mathbb{R}}}
\def\cprime{$'$}
\begin{document}


\title{Estimating
Speed and Damping in the Stochastic Wave Equation}
\author{W. Liu}
\curraddr[W. Liu]{Department of Mathematics, USC\\
Los Angeles, CA 90089 USA\\
tel. (+1) 213 821 1480; fax: (+1) 213 740 2424}
\email[W. Liu]{liu5@usc.edu}
\author{S. V. Lototsky}
\curraddr[S. V. Lototsky]{Department of Mathematics, USC\\
Los Angeles, CA 90089 USA\\
tel. (+1) 213 740 2389; fax: (+1) 213 740 2424}
\email[S. V. Lototsky]{lototsky@usc.edu}
\urladdr{http://www-rcf.usc.edu/$\sim$lototsky}

\subjclass[2000]{Primary  62F12; Secondary 60G15, 60H15,
 60G30, 62M05}
\keywords{Cylindrical Brownian motion,
  Second-Order Stochastic Equations,
  Stochastic Hyperbolic Equations}

\thanks{The work was partially supported
by the NSF Grant DMS-0803378.}

 \begin{abstract}
A parameter estimation problem is considered for a one-dimensional stochastic
wave equation driven by additive space-time Gaussian white noise.
 The estimator is of spectral type and utilizes
 a finite number of the
spatial Fourier coefficients of the solution.
The asymptotic properties of the estimator are studied
as the number of the Fourier coefficients increases, while
the observation time and the noise intensity are fixed.
\end{abstract}

\maketitle

\section{Introduction}

Consider the stochastic wave equation
\begin{equation}
\label{SWE1-in}
\frac{\partial^2 u}{\partial t^2}=\theta_1\,\frac{\partial^2 u}{\partial x^2}
+\theta_2\,\frac{\partial u}{\partial t} + \dot{W}(t),\ 0<t<T,\ 0<x<\pi,
\end{equation}
with zero initial boundary conditions, driven by space-time white noise $\dot{W}$.
The solution of this equation can be written as a Fourier series
$$
u(t,x)=\sqrt{\frac{2}{\pi}}\,\sum_{k= 1}^{\infty}u_k(t)\,\sin(kx).
$$
The objective is to construct and investigate the maximum likelihood estimators of the
unknown numbers $\theta_1>0$ and $\theta_2\in \bR$, given
$\{u_1(t),\ldots, u_N(t)\}$, $t\in [0,T]$, the first $N$ Fourier coefficients of the
solution.

A similar problem for stochastic parabolic equations is relatively well studied, with the
first result announced in
the paper by Huebner, Khasminskii, and Rozovskii \cite{HKR}. While most of the existing papers
concentrate on estimating either a single parameter or a function of time,
estimation of several parameters in parabolic equations has also been studied
\cite{Hub1,Lot1}. The objective of the current paper is to extend some of the
results from \cite{Hub1} to stochastic hyperbolic equations. In both parabolic and
hyperbolic setting, the underlying assumption is that the solution $u=u(t,x)$ of the equation
can be measured at every point in time and space. Then the Fourier coefficients
$u_k$ of the solution can be computed and used to construct the estimator.

The main result of the paper is as follows.

\begin{theorem}
The (joint) maximum likelihood estimator of the parameters $\theta_1, \theta_2$
is strongly consistent and asymptotically normal as $N\to \infty$. The normalizing matrix
is diagonal, with the diagonal elements $N^{3/2}$ and $N^{1/2}$; these elements
specify the rate of convergence of the estimator to $\theta_1$ and $\theta_2$, respectively.
\end{theorem}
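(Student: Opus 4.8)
The plan is to reduce the infinite-dimensional problem to a sequence of finite-dimensional ones by diagonalizing in the basis $\{\sqrt{2/\pi}\,\sin(kx)\}$, which simultaneously diagonalizes $\partial_x^2$ (eigenvalues $-k^2$) and turns the space-time white noise into a sequence of independent standard Brownian motions $w_k$. Each Fourier coefficient then solves the damped oscillator $\ddot u_k-\theta_2\dot u_k+\theta_1 k^2 u_k=\dot w_k$ with $u_k(0)=\dot u_k(0)=0$; writing $v_k=\dot u_k$ and $\phi_k=(-k^2u_k,\,v_k)^{\top}$, the drift of $v_k$ is $\theta^{\top}\phi_k$ with $\theta=(\theta_1,\theta_2)^{\top}$. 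Since each observed path $u_k$ on $[0,T]$ is $C^1$, I recover $v_k$ and the observed differential $dv_k$, and because only the drift depends on $\theta$ the measures are equivalent; Girsanov gives the modal log-likelihood, and summing the independent contributions $k=1,\dots,N$ and solving the score equation yields
$$\hat\theta_N-\theta = F_N^{-1}M_N,\qquad F_N=\sum_{k=1}^N\int_0^T\phi_k\phi_k^{\top}\,dt,\qquad M_N=\sum_{k=1}^N\int_0^T\phi_k\,dw_k,$$
with $\langle M_N\rangle=F_N$. The decisive structural fact is that, the $w_k$ being independent, the summands of both $F_N$ and $M_N$ are \emph{independent} across $k$, so classical limit theorems for sums of independent (non-identically distributed) vectors apply in place of martingale theory.

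Next I would pin down the growth rates through the second moments of the modes. Using the impulse response $G_k(t)=\omega_k^{-1}e^{\theta_2 t/2}\sin(\omega_k t)$ with $\omega_k=\sqrt{\theta_1 k^2-\theta_2^2/4}$, one finds $\bE\int_0^T u_k^2\,dt\sim C_1 k^{-2}$ and $\bE\int_0^T v_k^2\,dt\to C_2$ as $k\to\infty$; for the cross term I would exploit the exact identity $\int_0^T u_k v_k\,dt=\tfrac12 u_k^2(T)$ (from $u_k(0)=0$), whose expectation is $O(k^{-2})$. Hence the diagonal entries of $F_N$ grow like $N^3$ and $N$ while the off-diagonal entry is only $O(N)$. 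With the normalizer $\Phi_N=\mathrm{diag}(N^{3/2},N^{1/2})$ this gives $\Phi_N^{-1}F_N\Phi_N^{-1}\to\mathrm{diag}(\alpha,\beta)$ almost surely by the strong law for independent summands, where crucially the normalized off-diagonal entry is $O(N^{-1})\to 0$; this asymptotic decoupling is exactly what forces the limiting normalizing matrix to be diagonal with the stated exponents.

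Consistency then follows because $\hat\theta_N-\theta=\Phi_N^{-1}(\Phi_N^{-1}F_N\Phi_N^{-1})^{-1}(\Phi_N^{-1}M_N)$, where the two bracketed factors are bounded in probability while $\Phi_N^{-1}\to 0$; more directly, the strong law gives $F_N^{-1}M_N\to 0$ almost surely. For asymptotic normality I would apply the central limit theorem for row-wise independent arrays to $\Phi_N^{-1}M_N$: its covariance $\Phi_N^{-1}\bE[F_N]\Phi_N^{-1}$ tends to $\mathrm{diag}(\alpha,\beta)$, so after verifying a Lyapunov condition one obtains $\Phi_N^{-1}M_N\Rightarrow N(0,\mathrm{diag}(\alpha,\beta))$. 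Combining this with the almost sure limit of the normalized information and Slutsky's theorem yields
$$\Phi_N(\hat\theta_N-\theta)\ \Longrightarrow\ N\!\left(0,\ \mathrm{diag}(\alpha,\beta)^{-1}\right),$$
which is the assertion, with $\theta_1$ and $\theta_2$ estimated at rates $N^{3/2}$ and $N^{1/2}$.

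I expect the main obstacle to be the sharp second-moment asymptotics over the \emph{fixed finite} horizon $[0,T]$ with null initial data, where there is no stationary regime to lean on, particularly when $\theta_2\ge 0$ and the modal oscillators are undamped or unstable. Controlling the oscillatory integrals $\int_0^t e^{\theta_2 s}\sin^2(\omega_k s)\,ds$, showing that replacing $\sin^2$ by its mean and that the $O(1/k)$ corrections to $\omega_k$ and to $v_k$ contribute only summable errors after normalization, and establishing the fourth-moment bound needed for the Lyapunov condition, form the technical heart of the argument; the vanishing of the normalized off-diagonal term, resting on the telescoping identity and the $k^{-2}$ decay of $\bE u_k^2(T)$, is what ultimately makes the limit covariance diagonal.
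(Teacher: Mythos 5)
Your proposal is correct and follows essentially the same route as the paper: Fourier diagonalization into independent damped oscillators, the Girsanov/diffusion-type likelihood summed over modes, the explicit estimator error $F_N^{-1}M_N$ (the matrix form of the paper's formulas for $\hat{\theta}_{i,N}-\theta_i$ in terms of $J_{1,N}, J_{2,N}, J_{12,N}, \xi_{1,N}, \xi_{2,N}$), the exact identity $\int_0^T u_k v_k\,dt=\tfrac12 u_k^2(T)$ to kill the normalized off-diagonal term, and then a strong law plus a central limit theorem for the sums over $k$. The only cosmetic difference is that you invoke a Lyapunov CLT for independent vector summands to get joint normality of $\Phi_N^{-1}M_N$, whereas the paper applies a martingale CLT componentwise; both are valid here since the modes are independent.
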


This theorem is proved in Section 3. In Section 2, we establish existence,
uniqueness, and regularity of the solution of \eqref{SWE1-in}.

Throughout the presentation below, we fix a
stochastic basis
 $$
 \mathbb{F}=(\Omega, \mathcal{F},\{\mathcal{F}_t\}_{t\geq 0},
\mathbb{P})
$$
with the usual assumptions
(completeness of $\mathcal{F}_0$ and right-continuity of
$\mathcal{F}_t$). We also assume that $\mathbb{F}$ is large
enough to support countably many independent standard Brownian
motions. For a random variable $\xi$, $\bE\xi$
denotes the expectation.
$\bR^n$ is an $n$-dimensional Euclidean space; $\mathcal{C}(A;B)$ is the
space of continuous functions from $A$ to $B$;
$\mathcal{N}(m,\sigma^2)$ is a Gaussian random variable with
mean $m$ and variance $\sigma^2$.

Finally, for the convenience of the reader, we recall that a cylindrical Brownian
motion $W=W(t)$, $t\geq 1$, over (or on) a Hilbert space $H$ is a linear mapping
$$
W: f\mapsto W_f(\cdot)
$$
 from
$H$ to the space of zero-mean Gaussian processes such that, for every
$f,g\in H$ and $t,s>0$,
\begin{equation}
\label{CBM}
\bE\big(W_f(t)W_g(s)\big)=\min(t,s)(f,g)_H.
\end{equation}
If $\{h_k,\ k\geq 1\}$ is an orthonormal basis in $H$ and $w_k,\ k\geq1,$ are
independent standard Brownian motions, then
\begin{equation}
\label{CBM1}
f\mapsto \sum_{k\geq 1} (f,h_k)_Hw_k(t)
\end{equation}
is a cylindrical Brownian motion. Thus, a cylindrical Brownian motion
$W$ is often represented by a generalized Fourier series
\begin{equation}
\label{CBM2}
W(t)=\sum_{k\geq 1} w_k(t) h_k,
\end{equation}
where $w_k=W_{h_k}$. The corresponding space-time white noise is then
$$
\dot{W}(t)=\sum_{k\geq 1} \dot{w}_k(t)h_k.
$$

\section{Stochastic Wave Equation}
Consider the  equation
\begin{equation}
\label{SWE1}
\frac{\partial^2 u}{\partial t^2}=a^2\frac{\partial^2 u}{\partial x^2}
-2b\frac{\partial u}{\partial t} + \dot{W}(t),\ 0<t<T,\ 0<x<\pi,
\end{equation}
where $W$ is a cylindrical Brownian motion over $L_2((0,\pi))$.
For simplicity, we assume
\begin{align}
\label{coef} &a \geq 1,\ \  2|b|\leq 1;\\
\label{IBC}
&u|_{t=0}=\frac{\partial u}{\partial t}\Bigg|_{t=0}=0,\ \
u|_{x=0}=u|_{x=\pi}=0;
\end{align}
see Remark \ref{rem:cond} below about relaxing these assumptions.
In physical models, $a>0$ represents the speed of the wave and $b$
characterizes damping (amplification, if $b<0$).

For $\gamma\in \bR$, define the Hilbert space $H^{\gamma}$ as the closure
of the set of smooth compactly supported functions on $(0,\pi)$
with respect to the norm
\begin{equation}
\label{SobSp}
\|f\|_{\gamma}=\left(\sum_{k\geq1} k^{2\gamma}f_k^2\right)^{1/2},\ {\rm where}\
f_k=\sqrt{\frac{2}{\pi}}\int_0^{\pi} f(x)\sin(kx)dx.
\end{equation}
Note that  each of the functions $\sin(kx)$ belongs to every $H^{\gamma}$, and
if $f$ is  twice continuously-differentiable on $(0,\pi)$ with $f(0)=f(\pi)=0$, then, after
two integrations by parts,
$|f_k|\leq k^{-2} \sup_{x\in (0,\pi)}|f''(x)|$, so that, in particular,  $f\in H^{1}$.
More generally, every  $f\in H^{\gamma}$ can be identified with a sequence $\{f_k,\ k\geq 1\}$ of real
numbers such that $\sum_{k\geq 1}k^{2\gamma}f_k^2<\infty$. Even though $f$ is a generalized
function when  $\gamma<0$, we will still occasionally write $f=f(x)$, keeping in mind a generalized
Fourier series
representation $f(x)=\sqrt{2/\pi}\,\sum_{k\geq 1}f_k\sin(kx)$.

Given $\gamma> 0$, $f\in H^{-\gamma}$ and
$g\in H^{\gamma}$, we define
$$
(f,g)=\sum_{k\geq 1} f_kg_k;
$$
if $f,g\in L_2((0,\pi))$, then
$$
(f,g)=\int_0^{\pi} f(x)g(x)dx.
$$
In other words, $(\cdot, \cdot)$ is the duality between $H^{\gamma}$ and $H^{-\gamma}$
relative to the inner product in $H^0=L_2((0,\pi))$; see
\cite[Section IV.1.10]{KPS}.

Equation \eqref{SWE1} is interpreted as a system of two first-order
It\^{o} equations
\begin{equation}
\label{SWE2}
du=vdt,\ dv=(a^2u_{xx}-2bv)dt+dW(t).
\end{equation}
More precisely, we have the following definition.

\begin{definition}
\label{def-sol}
An adapted process $u\in L_2\big(\Omega\times(0,T)\times (0,\pi)\big)$
is called a solution of \eqref{SWE1} if there
exists an adapted process $v$ such that
\begin{enumerate}
\item $v\in L_2\big(\Omega; L_2((0,T);H^{-1})\big)$;
\item For every twice continuously-differentiable on $(0,\pi)$ function $f=f(x)$
with $f(0)=f(\pi)=0$, the equalities
\begin{equation}
\label{SWE3}
\begin{split}
(u(t,\cdot),f)&=\int_0^t (v(t,\cdot),f)(s)ds,\\
(v(t,\cdot),f)&=\int_0^t \big(a^2(u(t,\cdot),f'')-2b(v(t,\cdot),f)\big)ds+W_f(t)
\end{split}
\end{equation}
hold for all $t\in [0,T]$ on the same set of probability one.
\end{enumerate}
\end{definition}

Here is the main result about existence and uniqueness of
solution of \eqref{SWE1}.

\begin{theorem}
\label{th:reg-sol}
Under assumptions \eqref{coef} and \eqref{IBC}, equation \eqref{SWE1}
has a unique solution and, for every $\gamma<1/2$,
\begin{equation}
\label{Reg-u}
u \in {L}_2\big(\Omega;L_2((0,T);{H}^{\gamma})\big);\ \
v\in  {L}_2\big(\Omega; L_2((0,T);{H}^{\gamma-1})\big).
\end{equation}
\end{theorem}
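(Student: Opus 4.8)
The plan is to exploit the fact that the eigenfunctions $\sin(kx)$ of the spatial operator $\partial_{xx}$, subject to the given boundary conditions, simultaneously diagonalize the whole system, reducing \eqref{SWE1} to a countable family of decoupled scalar equations. Taking $f = \sqrt{2/\pi}\,\sin(kx)$ as the test function in Definition \ref{def-sol} --- which is admissible, being smooth and vanishing at $0$ and $\pi$ --- and using $f'' = -k^2 f$ together with $W_f = w_k$, I would show that the pair $(u_k, v_k)$ of Fourier coefficients must solve the linear It\^o system
\begin{equation}
\label{mode-sde}
du_k = v_k\,dt,\qquad dv_k = \big(-a^2k^2 u_k - 2b v_k\big)\,dt + dw_k(t),\qquad u_k(0)=v_k(0)=0.
\end{equation}
Existence and uniqueness for each $k$ is then classical (a linear SDE with constant coefficients), and, since distinct modes are driven by independent $w_k$, this simultaneously establishes uniqueness of the solution of \eqref{SWE1} and furnishes a candidate solution by setting $u(t,x) = \sqrt{2/\pi}\sum_k u_k(t)\sin(kx)$, and likewise for $v$.

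The second step is to solve \eqref{mode-sde} explicitly. The companion matrix has characteristic roots $-b \pm \sqrt{b^2 - a^2k^2}$, and the standing assumptions $a\geq 1$, $2|b|\leq 1$ guarantee $a^2k^2 - b^2 \geq 3/4 > 0$ for every $k\geq 1$; hence every mode is underdamped, with no change of regime to track. Writing $\omega_k = \sqrt{a^2k^2-b^2}$, the Duhamel representation gives
\begin{equation}
\label{duhamel}
u_k(t) = \int_0^t g_k(t-s)\,dw_k(s),\qquad g_k(r) = \frac{e^{-br}\sin(\omega_k r)}{\omega_k},
\end{equation}
together with $v_k(t) = \int_0^t \dot{g}_k(t-s)\,dw_k(s)$, where $\dot{g}_k(r) = e^{-br}\big(\cos(\omega_k r) - (b/\omega_k)\sin(\omega_k r)\big)$. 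The It\^o isometry then expresses $\bE u_k^2(t)$ and $\bE v_k^2(t)$ as elementary integrals.

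The third and decisive step is to extract uniform-in-$k$ bounds on these second moments. Since $e^{-2br}\leq e^{|b|T}$ on $[0,T]$ and $\omega_k^2 \geq (3/4)k^2$, the isometry applied to \eqref{duhamel} gives $\bE u_k^2(t) \leq C_T\,\omega_k^{-2} \leq C_T'\,k^{-2}$, while the boundedness of $\dot{g}_k$ uniformly in $k$ (as $|b|/\omega_k$ is bounded) gives $\bE v_k^2(t)\leq C_T$. Summing against the Sobolev weights reduces the regularity claim \eqref{Reg-u} to convergence of a single numerical series:
\begin{equation}
\label{final-series}
\bE\int_0^T \|u(t,\cdot)\|_\gamma^2\,dt \leq C_T \sum_{k\geq 1} k^{2\gamma-2},\qquad
\bE\int_0^T \|v(t,\cdot)\|_{\gamma-1}^2\,dt \leq C_T \sum_{k\geq 1} k^{2\gamma-2},
\end{equation}
and both series converge precisely when $2\gamma-2 < -1$, i.e. $\gamma < 1/2$. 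Taking $\gamma = 0$ places $u$ in $L_2\big(\Omega\times(0,T)\times(0,\pi)\big)$ and $v$ in $L_2\big(\Omega; L_2((0,T); H^{-1})\big)$, so the reconstructed series genuinely defines a solution in the sense of Definition \ref{def-sol}. I expect the only real subtlety to lie not in any single estimate but in verifying that the formally reconstructed $u(t,x)$ satisfies the weak identities \eqref{SWE3} for every admissible test function $f$: this requires interchanging the infinite sum with the time and stochastic integrals and expanding a general $f$ in the $\sin(kx)$ basis, steps whose legitimacy is exactly what the moment bounds above are designed to justify.
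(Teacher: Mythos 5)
Your proposal is correct and follows essentially the same route as the paper's own proof: diagonalization by the sine basis, the explicit variation-of-parameters (Duhamel) representation for each mode (identical to \eqref{Osc3}), the moment bounds $\bE u_k^2(t)\leq C k^{-2}$, $\bE v_k^2(t)\leq C$, and summation of the Sobolev-weighted series to get \eqref{Reg-u}, with uniqueness coming from the completeness of the sine system. The only difference is cosmetic: you spell out the series-convergence computation and flag the term-by-term verification of \eqref{SWE3}, which the paper asserts without detail.
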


\begin{proof} While the result can be derived from the general theory
of stochastic hyperbolic equations (see, for example, Chow \cite[Theorem 6.8.4]{Chow-Spde}),
we present a different, and a more direct, proof. This proof will also help in the
construction and analysis of the estimators.

Take in \eqref{SWE3} $f(x)=\sqrt{2/\pi} \sin(kx)$ and write
$u_k(t)=(u(t,\cdot),f)$, $v_k(t)=(v(t,\cdot),f)$, $w_k=W_f$. Then
\begin{equation}
\label{Osc1}
u_k(t)=\int_0^tv_k(s)ds,\ v_k(t)=-a^2k^2\int_0^tu_k(s)ds-2b\int_0^t v_k(s)ds+w_k(t),
\end{equation}
or
\begin{equation}
\label{Osc2}
\ddot{u}_k(t)+2b\dot{u}_k(t)+a^2k^2u_k(t)=\dot{w}_k(t),
 \ \ u_k(0)=\dot{u}_k(0)=0.
\end{equation}
By assumption \eqref{coef},
\begin{equation}
\label{osc-condM}
a^2k^2>b^2
\end{equation}
 for all $k\geq 1$.
   Define
\begin{equation}
\label{ell-k}
\ell_k=\sqrt{a^2k^2-b^2}.
\end{equation}
Using the variation of parameters formula for the linear second-order
equation with constant coefficients, we conclude that the solution of \eqref{Osc1}
is
\begin{equation}
\label{Osc3}
\begin{split}
u_k(t)&=\frac{1}{\ell_k}\int_0^te^{-b(t-s)}\sin\big(\ell_k(t-s)\big)dw_k(s),\\
v_k(t)&=\frac{1}{\ell_k}\int_0^te^{-b(t-s)}\Big(
\ell_k\cos\big(\ell_k(t-s)\big) - b \sin\big(\ell_k(t-s)\big)\Big)dw_k(s).
\end{split}
\end{equation}
By direct computation, there exists a number $C=C(T,a,b)$ such that,
for all $t,s\in [0,T]$,
\begin{equation}
\label{Osc4}
\bE  u^2_k(t) \leq \ell_k^{-2} C(T)= \frac{C(T)}{a^2k^2-b^2},\ \ \
\bE  v_k^2(t) \leq {C(T)}.
\end{equation}
Then the Gaussian processes
\begin{equation}
\label{SOL}
u(t,x)=\sqrt{\frac{2}{\pi}}\sum_{k\geq 1} u_k(t)\sin(kx),\ \
v(t,x)= \sqrt{\frac{2}{\pi}}\sum_{k\geq 1} v_k(t)\sin(kx)
\end{equation}
satisfy \eqref{SWE3} and \eqref{Reg-u}. Uniqueness of the solution follows from
the completeness of the system $\{\sqrt{2/\pi} \sin(kx),\ k\geq 1\}$ in
$L_2((0,\pi))$.
\end{proof}

\begin{remark}
\label{rem:cond}  We can now comment on the significance of assumptions
\eqref{coef} and \eqref{IBC}. Assumption \eqref{coef} can be relaxed to
$a>0$, because we will still have $a^2k^2>b^2$ for all sufficiently
large $k$, and so representation formulas \eqref{Osc3} for the
solution of equation \eqref{Osc2} will continue to hold for all sufficiently large
$k$. In other words,  if $a>0$, then the free
motion (any solution of the homogeneous version of \eqref{Osc2}) is oscillatory
for all sufficiently large $k\geq 1;$ the oscillations are damped
  if $b>0$, harmonic if $b=0$, and amplified if $b>0$. This is also the reason
  to call $b$ the {\tt damping coefficient}, with an understanding that negative
  damping means amplification. Thus,  \eqref{coef} is only needed to simplify the
computations by ensuring that   equalities
 \eqref{Osc3} hold  {\em for all} $k\geq 1$.

Non-zero initial conditions, if sufficiently regular,
will not affect existence and regularity of the solution. Similarly, the analysis will
not change much for zero Neumann or other homogeneous boundary conditions.
\end{remark}

\section{Estimating the Coefficients}

In this section, we assume that the solution $u=u(t,x),\ v=v(t,x)$ of
equation \eqref{SWE1} is observed for all $0\leq t \leq T$ and
$x\in (0,\pi)$, and study the question of estimating the numbers
$a^2,b$ from these observations. It will be convenient to introduce the
notations
\begin{equation}
\label{not-coef}
\theta_1=a^2,\ \theta_2=-2b,
\end{equation}
so that \eqref{SWE1} becomes
\begin{equation}
\label{SWE-e1}
\frac{\partial^2 u}{\partial t^2}=\theta_1\frac{\partial^2 u}{\partial x^2}
+\theta_2\frac{\partial u}{\partial t} +
\dot{W}(t),\ t<0<T,\ 0<x<\pi.
\end{equation}
To simplify the presentation, we keep the assumptions \eqref{coef} and \eqref{IBC}.
By Theorem \ref{th:reg-sol}, the solution of \eqref{SWE-e1}
has a Fourier series expansion \eqref{Osc3}. We will construct the maximum likelihood
estimators of $\theta_1$ and $\theta_2$ using the observations of the
$2N$-dimensional process $\{u_k(t),\ v_k(t),\ k=1,\ldots,N,\ t\in [0,T]\}$ and
study the asymptotic properties of the estimators in the limit $N\to \infty$.
Note that both the amplitude of noise and the observation time are fixed.

By \eqref{Osc1},
\begin{equation}
\label{Osc1-e}
u_k(t)=\int_0^tv_k(s)ds,\ v_k(t)=-\theta_1k^2\int_0^tu_k(s)ds
+\theta_2\int_0^t v_k(s)ds+w_k(t).
\end{equation}
For each $k\geq 1$, the processes $u_k$, $v_k$, and $w_k$  generate measures
$\mathbf{P}^u_k$, $\mathbf{P}^v_k$, $\mathbf{P}^w_k$ in the
space $\mathcal{C}((0,T);\bR)$
of continuous, real-valued functions on $[0,T]$.
Since $u_k$ is a continuously-differentiable function, the
measures $\mathbf{P}^u_k$ and
$\mathbf{P}^w_k$ are mutually singular. On the other hand, we can write
\begin{equation}
\label{v-e}
dv_k(t)=F_k(v)dt+dw_k,
\end{equation}
where
$F_k(v)=-\theta_1k^2\int_0^tv_k(s)ds+\theta_2v_k(t)$ is a
non-anticipating functional of $v$.
Thus, the process $v$ is a process of diffusion type in the sense of
Liptser and Shiryaev \cite[Definition 4.2.7]{LSh1}. Further analysis shows
that the measure $\mathbf{P}^v_k$ is
absolutely continuous with respect to the measure $\mathbf{P}^w_k$, and
\begin{equation}
\label{density1-e}
\begin{split}
\frac{d\mathbf{P}^v_k}{d\mathbf{P}^w_k}(v_k)&=
\exp\Bigg(\int_0^T\big(-\theta_1k^2u_k(t)+\theta_2v_k(t)\big)dv_k(t)\\
& -\frac{1}{2}
\int_0^T\big(-\theta_1k^2u_k(t)+\theta_2v_k(t)\big)^2dt\Bigg);
\end{split}
\end{equation}
see \cite[Theorem 7.6]{LSh1}.
Since the processes $w_k$ are independent for different $k$, so are the processes $v_k$.
Therefore, the measure $\mathbf{P}^{v,N}$ generated in $\mathcal{C}((0,T);\bR^N)$
by the vector process
$\{v_k,\ k=1,\ldots,N\}$ is absolutely continuous with respect to the
 measure $\mathbf{P}^{w,N}$ generated in $\mathcal{C}((0,T);\bR^N)$
by the vector process
$\{w_k,\ k=1,\ldots,N\}$, and the density is
\begin{equation}
\label{density2-e}
\begin{split}
\frac{d\mathbf{P}^{v,N}}{d\mathbf{P}^{w,N}}(v_k)&=
\exp\Bigg(\sum_{k=1}^N\int_0^T\big(-\theta_1k^2u_k(t)+\theta_2v_k(t)\big)dv_k(t)\\
&-\frac{1}{2}
\sum_{k=1}^N\int_0^T\big(-\theta_1k^2u_k(t)+\theta_2v_k(t)\big)^2dt\Bigg);
\end{split}
\end{equation}
the corresponding log-likelihood ratio is
\begin{equation}
\label{log-lik}
\begin{split}
Z_N(\theta_1,\theta_2)&=\sum_{k=1}^N\Bigg(\int_0^T\big(-\theta_1k^2u_k(t)+\theta_2v_k(t)\big)dv_k(t)\\
&-\frac{1}{2}
\int_0^T\big(-\theta_1k^2u_k(t)+\theta_2v_k(t)\big)^2dt\Bigg).
\end{split}
\end{equation}
Introduce the following notations:
\begin{equation}
\label{notations}
\begin{split}
&J_{1,N}=\sum_{k=1}^N k^4\int_0^T u_k^2(t)dt,\ \ J_{2,N}=\sum_{k=1}^N\int_0^Tv_k^2(t)dt,\\
&J_{12,N}=\sum_{k=1}^Nk^2\int_0^Tu_k(t)v_k(t)dt;\\
&B_{1,N}=-\sum_{k=1}^N k^2 \int_0^Tu_k(t)dv_k(t),\ \xi_{1,N}=\sum_{k=1}^N k^2 \int_0^Tu_k(t)dw_k(t);\\
&B_{2,N}=\sum_{k=1}^N \int_0^Tv_k(t)dv_k(t),\ \xi_{2,N}=\sum_{k=1}^N \int_0^Tv_k(t)dw_k(t).
\end{split}
\end{equation}
Note that the numbers $J$ and $B$ are computable from the observations of $u_k$ and $v_k$, $k=1,
\ldots, N$, and also
\begin{align}
\label{ident-B}  B_{1,N}=\theta_1J_{1,N}&-\theta_2J_{12,N}-\xi_{1,N},\ \
B_{2,N}=-\theta_1J_{12,N}+\theta_2J_{2,N}+\xi_{2,N},\\
\label{ident-J} &J_{12,N}=\frac{1}{2}\sum_{k=1}^N k^2u_k^2(T).
\end{align}
We consider the problem of estimating simultaneously both
$\theta_1$ and $\theta_2$ from the observations
$$
\{u_k(t),\ v_k(t),\ k=1,\ldots, N,\ t\in [0,T].
$$
 The maximum likelihood estimators $\hat{\theta}_{1,N},
\ \hat{\theta}_{2,N}$ satisfy
$$
\frac{\partial Z_N(\theta_1,\theta_2)}{\partial \theta_1}
\Bigg|_{\theta_1=\hat{\theta}_{1,N},\theta_2=\hat{\theta}_{2,N}}=0
\quad {\rm and}\quad
\frac{\partial Z_N(\theta_1,\theta_2)}{\partial \theta_2}
\Bigg|_{\theta_1=\hat{\theta}_{1,N},\theta_2=\hat{\theta}_{2,N}}=0,
$$
 or, after solving the system of equations,
\begin{equation}
\label{est-th3}
\hat{\theta}_{1,N}=\frac{B_{1,N}J_{2,N}
+B_{2,N}J_{12,N}}{J_{1,N}J_{2,N}-J_{12,N}^2},\ \
\hat{\theta}_{2,N}=\frac{B_{1,N}J_{12,N}
+B_{2,N}J_{1,N}}{J_{1,N}J_{2,N}-J_{12,N}^2}.
\end{equation}

For $T>0$ and $\theta_2\in \bR$, define
\begin{equation}
\label{coef-th2}
C(\theta_2,T)=
\begin{cases}
\displaystyle \frac{e^{\theta_2T}-\theta_2T-1}{2\theta_2^2},&{\ \rm if}\ \theta_2\not=0;\\
\displaystyle  \frac{T^2}{4},& {\ \rm if}\ \theta_2=0.
\end{cases}
\end{equation}
Note that $C(\theta_2,T)>0$ for all $T>0$ and $\theta_2\in \bR$.

The following theorem describes the asymptotic behavior of the estimators
\eqref{est-th3}.

\begin{theorem}
\label{th:main}
Under assumptions \eqref{coef} and \eqref{IBC},
$$
\lim_{N\to \infty}\hat{\theta}_{1,N}^{(3)}=\theta_1,\ \
\lim_{N\to \infty}\hat{\theta}_{2,N}^{(3)}=\theta_2
$$
with probability one and
\begin{equation*}
\begin{split}
&\lim_{N\to \infty} N^{3/2}(\hat{\theta}_{1,N}-\theta_1)=
\mathcal{N}\left(0,\frac{3\theta_1}{C(\theta_2,T)}\right), \\
&\lim_{N\to \infty} N^{1/2}(\hat{\theta}_{2,N}-\theta_2)=
\mathcal{N}\left(0,\frac{1}{C(\theta_2,T)}\right)
\end{split}
\end{equation*}
in distribution.
\end{theorem}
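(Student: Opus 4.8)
The plan is to reduce the whole statement to two error representations obtained by substituting the identities \eqref{ident-B} into the estimator formulas \eqref{est-th3}. Writing $D_N=J_{1,N}J_{2,N}-J_{12,N}^2$ for the common denominator and plugging in $B_{1,N}=\theta_1J_{1,N}-\theta_2J_{12,N}-\xi_{1,N}$ and $B_{2,N}=-\theta_1J_{12,N}+\theta_2J_{2,N}+\xi_{2,N}$, the terms carrying $\theta_1,\theta_2$ reassemble into $D_N$ and cancel, leaving
\[
\hat\theta_{1,N}-\theta_1=\frac{\xi_{2,N}J_{12,N}-\xi_{1,N}J_{2,N}}{D_N},\qquad
\hat\theta_{2,N}-\theta_2=\frac{\xi_{2,N}J_{1,N}-\xi_{1,N}J_{12,N}}{D_N}.
\]
Thus everything reduces to the growth rates of the deterministic-order quantities $J_{1,N},J_{2,N},J_{12,N}$ and of the martingale-type terms $\xi_{1,N},\xi_{2,N}$.

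Next I would establish the exact first-order asymptotics of the $J$'s. Using the explicit representation \eqref{Osc3} with $\ell_k=\sqrt{a^2k^2-b^2}$ and the substitution $2b=-\theta_2$, I would compute $\bE\int_0^Tu_k^2(t)dt$ and $\bE\int_0^Tv_k^2(t)dt$ by expanding $\sin^2,\cos^2$ and averaging the fast oscillations $\sin(\ell_k\cdot),\cos(\ell_k\cdot)$ (Riemann--Lebesgue) as $k\to\infty$; this gives $\bE\int_0^Tu_k^2dt\sim C(\theta_2,T)/\ell_k^2$ and $\bE\int_0^Tv_k^2dt\to C(\theta_2,T)$, with $C(\theta_2,T)$ exactly as in \eqref{coef-th2}. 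Summing yields $\bE J_{1,N}\sim \tfrac{C(\theta_2,T)}{3\theta_1}N^3$ and $\bE J_{2,N}\sim C(\theta_2,T)N$, while the identity \eqref{ident-J} shows $J_{12,N}=\tfrac12\sum_k k^2u_k^2(T)$ is only of order $N$. To pass from expectations to almost-sure limits I would invoke a Kolmogorov strong law: since $u_k,v_k$ depend only on $w_k$, the summands in each $J$ are independent across $k$, so it suffices to bound the variances of these quadratic functionals and verify $\sum_k\Var(\cdot)/k^6<\infty$ for $J_{1,N}$ and $\sum_k\Var(\cdot)/k^2<\infty$ for $J_{2,N}$. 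This gives $J_{1,N}/N^3\to \tfrac{C(\theta_2,T)}{3\theta_1}$ and $J_{2,N}/N\to C(\theta_2,T)$ almost surely, whence $D_N\sim J_{1,N}J_{2,N}\sim \tfrac{C(\theta_2,T)^2}{3\theta_1}N^4$, the term $J_{12,N}^2=O(N^2)$ being negligible.

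For the numerators, each $\xi_{i,N}=\sum_{k=1}^N\eta_{i,k}$ is a sum of independent, zero-mean stochastic integrals with $\langle\xi_{1,N}\rangle=J_{1,N}$ and $\langle\xi_{2,N}\rangle=J_{2,N}$. For consistency, Kolmogorov's strong law gives $\xi_{1,N}/N^3\to0$ and $\xi_{2,N}/N\to0$ a.s., so the representations give $\hat\theta_{1,N}-\theta_1\sim-\xi_{1,N}/J_{1,N}\to0$ and $\hat\theta_{2,N}-\theta_2\sim\xi_{2,N}/J_{2,N}\to0$, the off-diagonal contributions $\xi_{2,N}J_{12,N}$ and $\xi_{1,N}J_{12,N}$ being of strictly lower order. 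For asymptotic normality the leading terms are $N^{3/2}(\hat\theta_{1,N}-\theta_1)\sim -(\xi_{1,N}/N^{3/2})/(J_{1,N}/N^3)$ and $N^{1/2}(\hat\theta_{2,N}-\theta_2)\sim (\xi_{2,N}/N^{1/2})/(J_{2,N}/N)$. I would apply the Lyapunov central limit theorem to the independent arrays $\{\eta_{1,k}\}$ and $\{\eta_{2,k}\}$: a Burkholder--Davis--Gundy bound gives $\bE\eta_{i,k}^4\le C\,\bE\big(\int_0^T(\cdot)^2dt\big)^2$, from which $\sum_k\bE\eta_{1,k}^4=O(N^5)$ against $(\sum_k\Var\eta_{1,k})^2=O(N^6)$, and $\sum_k\bE\eta_{2,k}^4=O(N)$ against $O(N^2)$, so the Lyapunov ratios tend to $0$. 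This yields $\xi_{1,N}/N^{3/2}\Rightarrow\mathcal N\big(0,\tfrac{C(\theta_2,T)}{3\theta_1}\big)$ and $\xi_{2,N}/N^{1/2}\Rightarrow\mathcal N\big(0,C(\theta_2,T)\big)$; combining with the a.s.\ limits of the $J$'s via Slutsky's theorem produces the two stated normal limits, with variances $\tfrac{C(\theta_2,T)/(3\theta_1)}{(C(\theta_2,T)/(3\theta_1))^2}=\tfrac{3\theta_1}{C(\theta_2,T)}$ and $\tfrac{C(\theta_2,T)}{C(\theta_2,T)^2}=\tfrac{1}{C(\theta_2,T)}$.

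The main obstacle is the package of second- and fourth-moment estimates for $\int_0^Tu_k^2dt$, $\int_0^Tv_k^2dt$, and the integrals $\eta_{i,k}$: one must show, uniformly in $k$, that the oscillatory cross-terms generated by $\sin(\ell_k\cdot),\cos(\ell_k\cdot)$ contribute only lower-order corrections, so that both the Kolmogorov summability conditions and the Lyapunov ratios hold and the constant $C(\theta_2,T)$ emerges with the exact value in \eqref{coef-th2}. These are elementary but lengthy computations built on \eqref{Osc3}--\eqref{Osc4}.
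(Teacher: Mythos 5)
Your proposal is correct and follows essentially the same route as the paper: substituting \eqref{ident-B} into \eqref{est-th3} to obtain the error representation, computing $\bE J_{1,N}\sim C(\theta_2,T)N^3/(3\theta_1)$, $\bE J_{2,N}\sim C(\theta_2,T)N$ and $J_{12,N}=O(N)$ via \eqref{ident-J}, then passing to almost-sure limits by a Kolmogorov-type strong law over the independent indices $k$, applying a CLT to $\xi_{1,N}$ and $\xi_{2,N}$, and finishing with Slutsky's theorem. The only cosmetic difference is that you normalize by the determinant $J_{1,N}J_{2,N}-J_{12,N}^2$ rather than the ratio $J_{12,N}^2/(J_{1,N}J_{2,N})$, and you invoke the Lyapunov CLT for the independent stochastic integrals where the paper cites a martingale CLT whose hypothesis is supplied by the same strong law; both are valid here because the summands are independent across $k$.
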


\begin{proof}
Define
$$
D_N=\frac{J_{12,N}^2}{J_{1,N}J_{2,N}}.
$$
It follows from \eqref{ident-B} and \eqref{est-th3} that
\begin{equation}
\label{est-th3p}
\begin{split}
\hat{\theta}_{1,N}^{(3)}=\theta_1+\frac{1}{1-D_N}
\left(\frac{\xi_{1,N}}{J_{1,N}}+{\xi_{2,N}}\frac{J_{12,N}}{J_{1,N}J_{2,N}}
\right),\\
\hat{\theta}_{2,N}^{(3)}=\theta_2+\frac{1}{1-D_N}
\left(\frac{\xi_{2,N}}{J_{2,N}}+{\xi_{1,N}}\frac{J_{12,N}}{J_{1,N}J_{2,N}}
\right).
\end{split}
\end{equation}

By direct computations using \eqref{Osc3} (and keeping in mind \eqref{not-coef}),
\begin{equation}
\label{asympt-u}
\lim_{k\to \infty} k^2\bE\int_0^T u_k^2(t)dt=\frac{C(\theta_2,T)}{\theta_1},
\end{equation}
and
\begin{equation}
\label{asympt-u1}
\lim_{N\to \infty} N^{-3}\bE J_{1,N} = \frac{C(\theta_2,T)}{3\theta_1}.
\end{equation}
Since each $u_k$ is a Gaussian process,
$$
\sup_{k} k^4\bE\int_0^T u_k^4(t)dt  < \infty,
$$
and then the strong law of large numbers  implies
$$
\lim_{N\to \infty} \frac{J_{1,N}}{\bE J_{1,N}}=1,\
\lim_{N\to \infty}\frac{\xi_{1,N}}{\bE J_{1,N}}=0,
$$
both with probability one [apply the first theorem in
 Appendix, taking
$\xi_k=k^4\int_0^Tu_k^2dt$ and then, $\xi_k=k^2\int_0^Tu_k(t)dw_k(t)$].
 The central limit theorem  implies
$$
\lim_{N\to \infty} \frac{\xi_{1,N}}{\sqrt{\bE J_{1,N}}}=\mathcal{N}(0,1)
$$
in distribution [apply the second theorem in Appendix, taking $f_k(t)=k^2u_k(t)$].
Similarly,
\begin{equation}
\label{asympt-v}
\lim_{k\to \infty} \bE\int_0^T v_k^2(t)dt={C(\theta_2,T)},
\end{equation}
and
\begin{equation}
\label{asympt-v1}
\lim_{N\to \infty} N^{-1}\bE J_{2,N} = {C(\theta_2,T)}.
\end{equation}
Since each $v_k$ is a Gaussian process,
$$
\sup_{k} \bE\int_0^T v_k^4(t)dt  < \infty,
$$
and then the strong law of large numbers  implies
$$
\lim_{N\to \infty} \frac{J_{2,N}}{\bE J_{2,N}}=1,\
\lim_{N\to \infty}\frac{\xi_{2,N}}{\bE J_{2,N}}=0,
$$
both with probability one. The central limit theorem  implies
$$
\lim_{N\to \infty} \frac{\xi_{2,N}}{\sqrt{\bE J_{2,N}}}=\mathcal{N}(0,1)
$$
in distribution. Finally,
define
$$
\tilde{C}(\theta_2,T)=
\begin{cases}
\displaystyle \frac{e^{\theta_2T}-1}{2\theta_2},&{\ \rm if}\ \theta_2\not=0;\\
\displaystyle  \frac{T}{2},& {\ \rm if}\ \theta_2=0.
\end{cases}
$$
Then
 \eqref{ident-J} and \eqref{Osc3} imply
\begin{equation*}
\label{asympt-12}
\lim_{N\to \infty} N^{-1} \bE J_{12,N}= \frac{\tilde{C}(\theta_2,T)}{2\theta_1},\
\end{equation*}
and, by the strong law of large numbers,
$$
\lim_{N\to \infty} \frac{J_{12,N}}{\bE J_{12,N}}=1
$$
with probability one.
Then \eqref{asympt-u1} and \eqref{asympt-v1} imply
$$
\lim_{N\to \infty} D_N=0,\
\lim_{N\to \infty}\frac{J_{12,N}}{J_{2,N}}=\frac{\tilde{C}(\theta_2,T)}{2\theta_1{C}(\theta_2,T)},
$$
both  with
probability one. The conclusions of the theorem now follow.
\end{proof}

\section{Acknowledgement} The work of SVL
 was partially supported
by the NSF Grant DMS-0803378.


\def\cprime{$'$} \def\cprime{$'$} \def\cprime{$'$} \def\cprime{$'$}
  \def\cprime{$'$} \def\cprime{$'$}


\section*{Appendix}
Below, we formulate the strong law of large numbers and the central
limit theorem used in the proof of Theorem \ref{th:main}.

\begin{theorem}[Strong Law of Large Numbers]
Let $\xi_k, \ k\geq 1,$ be independent random variables with the following
properties:
\begin{itemize}
\item $\bE \xi_k=0$,\ $\bE \xi_k^2>0$,
\item There exist real numbers $c>0$ and $ \alpha\geq -1$ such that
$$
\lim_{k\to \infty} k^{-\alpha}\bE \xi_k^2=c.
$$
\end{itemize}
Then, with probability one,
$$
\lim_{N\to \infty} \frac{\sum_{k=1}^N\xi_k}{\sum_{k=1}^N \bE \xi_k^2}=0.
$$
If, in addition, $\bE \xi_k^4 \leq c_1\Big(\bE\xi_k^2\Big)^2$ for all $k\geq 1$,
with $c_1>0$ independent of $k$, then, also with probability one,
$$
\lim_{N\to \infty} \frac{\sum_{k=1}^N \xi_k^2}{\sum_{k= 1}^N \bE \xi_k^2}=1.
$$
\end{theorem}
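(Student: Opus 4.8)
The plan is to reduce both assertions to the classical Kolmogorov--Kronecker mechanism for sums of independent, mean-zero summands. Throughout I write $S_N=\sum_{k=1}^N\xi_k$ and $b_N=\sum_{k=1}^N\bE\xi_k^2$; note $b_N$ is strictly increasing with $b_1=\bE\xi_1^2>0$. The first thing to extract from the growth hypothesis is the \emph{only} feature the first claim needs, namely $b_N\to\infty$: since $\bE\xi_k^2\sim c\,k^{\alpha}$ and $\sum_k k^{\alpha}$ diverges precisely when $\alpha\ge-1$ (the borderline $\alpha=-1$ giving the harmonic series), the assumption $\alpha\ge-1$ is exactly what guarantees $b_N\to\infty$.

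The key elementary estimate is then
\[
\sum_{k\ge2}\frac{\bE\xi_k^2}{b_k^2}=\sum_{k\ge2}\frac{b_k-b_{k-1}}{b_k^2}\le\sum_{k\ge2}\Big(\frac{1}{b_{k-1}}-\frac{1}{b_k}\Big)=\frac{1}{b_1}<\infty,
\]
where the inequality uses $b_{k-1}\le b_k$ and the telescoping collapses because $b_N\to\infty$. Consequently $\sum_k\Var(\xi_k/b_k)<\infty$, so by Kolmogorov's convergence theorem the series $\sum_k\xi_k/b_k$ converges almost surely, and Kronecker's lemma (valid since $b_N\uparrow\infty$) yields $S_N/b_N\to0$ a.s., which is the first assertion. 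I would cite \cite{Shir} for both of these classical facts.

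For the second assertion I would center the squares: put $\eta_k=\xi_k^2-\bE\xi_k^2$, so the $\eta_k$ are independent with $\bE\eta_k=0$, and
\[
\frac{\sum_{k=1}^N\xi_k^2}{b_N}=1+\frac{\sum_{k=1}^N\eta_k}{b_N}.
\]
It thus suffices to show $\sum_{k=1}^N\eta_k/b_N\to0$ a.s., and for this I would rerun the same Kolmogorov--Kronecker argument with $\eta_k$ in place of $\xi_k$ but the \emph{same} normalizer $b_N$. The fourth-moment bound gives $\Var(\eta_k)\le\bE\xi_k^4\le c_1(\bE\xi_k^2)^2$, so everything reduces to verifying $\sum_k(\bE\xi_k^2)^2/b_k^2<\infty$.

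This last convergence is the step I expect to require the most care, because here the precise growth rate, not merely divergence, is essential. The naive bound $(\bE\xi_k^2)^2/b_k^2\le\bE\xi_k^2/b_k$ is useless, since its right-hand side can behave like $1/k$ and diverge; the telescoping trick of the first part does not apply to the squared increments. Instead I would invoke the asymptotics forced by the hypothesis: for $\alpha>-1$ one has $b_k\sim c\,k^{\alpha+1}/(\alpha+1)$, whence $\bE\xi_k^2/b_k\sim(\alpha+1)/k$ and $(\bE\xi_k^2)^2/b_k^2=O(k^{-2})$, while for $\alpha=-1$ one has $b_k\sim c\log k$ and $(\bE\xi_k^2)^2/b_k^2\sim(k\log k)^{-2}$. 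In either case the series converges, so $\sum_k\Var(\eta_k/b_k)<\infty$; Kolmogorov's theorem then gives a.s.\ convergence of $\sum_k\eta_k/b_k$, and Kronecker's lemma completes the proof.
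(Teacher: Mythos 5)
Your proof is correct, but it takes a genuinely more self-contained route than the paper, whose entire proof is a one-line reduction: ``this is a particular case of Kolmogorov's strong law of large numbers,'' with a citation to Shiryaev. What you have done, in effect, is unpack that citation: the telescoping bound $\sum_{k}\bE\xi_k^2/b_k^2\le 1/b_1$, Kolmogorov's convergence theorem for series of independent mean-zero summands, and Kronecker's lemma are exactly the ingredients of the classical proof of Kolmogorov's SLLN, and your second part applies the same mechanism to the centered squares $\eta_k=\xi_k^2-\bE\xi_k^2$, which is precisely how one deduces the ratio statement from the general theorem in any case. The trade-off: the paper's citation is shorter, but it hides the only two places where the hypotheses actually do work, both of which you make explicit --- (i) the condition $\alpha\ge-1$ enters solely to force $b_N\to\infty$, which is what Kronecker's lemma needs; and (ii) for the second assertion the fourth-moment bound gives $\Var(\eta_k)\le c_1\big(\bE\xi_k^2\big)^2$, and the summability of $\sum_k\big(\bE\xi_k^2\big)^2/b_k^2$ genuinely requires the polynomial growth rate rather than mere divergence of $b_N$; your observation that the telescoping trick fails for this series is right (with irregular increments, say $b_k=2^k$, it diverges), so the case split $\alpha>-1$ versus $\alpha=-1$ with the asymptotics $b_N\sim cN^{\alpha+1}/(\alpha+1)$ and $b_N\sim c\log N$ is the correct and necessary substitute. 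One minor remark: your telescoping inequality bounds the partial sums by $1/b_1-1/b_N\le 1/b_1$ whether or not $b_N\to\infty$, so divergence of $b_N$ is not needed at that step, only later for Kronecker's lemma.
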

\begin{proof} This is a particular case of Kolmogorov's strong law of large numbers;
see, for example, Shiryaev \cite[Theorem IV.3.2]{Shir}.
\end{proof}

\begin{theorem}[Central Limit Theorem]
Let $w_k=w_k(t)$ be independent standard Brownian motions and
let $f_k=f_k(t)$ be adapted, continuous,
 square-integrable processes such that
$$
\lim_{N\to \infty} \frac{\sum_{k=1}^N \int_0^Tf_k^2(t)dt}
{\sum_{k=1}^N\bE\int_0^T f_k^2(t)dt}=1
$$
in probability. Then
$$
\lim_{N\to \infty}
\frac{\sum_{k=1}^N \int_0^Tf_k(t)dw_k(t)}
{\left(\sum_{k=1}^N\bE\int_0^T f_k^2(t)dt\right)^{1/2}}=
\mathcal{N}(0,1)
$$
in distribution.
\end{theorem}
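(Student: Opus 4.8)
The plan is to recognize the ratio as the terminal value of a continuous martingale whose quadratic variation is controlled by the hypothesis, and then to invoke the time-change (Dambis--Dubins--Schwarz) representation to reduce the assertion to a statement about Brownian motion evaluated at a random time that converges to $1$.

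First I would introduce the deterministic normalizer $A_N=\sum_{k=1}^N\bE\int_0^T f_k^2(t)\,dt$ and the random quantity $Q_N=\sum_{k=1}^N\int_0^T f_k^2(t)\,dt$, so that the hypothesis reads $Q_N/A_N\to 1$ in probability. For each $N$, set
\[
M_N(t)=\frac{1}{\sqrt{A_N}}\sum_{k=1}^N\int_0^t f_k(s)\,dw_k(s),\qquad t\in[0,T].
\]
Because the $w_k$ are independent standard $\mathcal F_t$-Brownian motions, $\langle w_j,w_k\rangle_t=\delta_{jk}t$, and hence $M_N$ is a continuous square-integrable $\mathcal F_t$-martingale with quadratic variation $\langle M_N\rangle_t=A_N^{-1}\sum_{k=1}^N\int_0^t f_k^2(s)\,ds$. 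In particular $\langle M_N\rangle_T=Q_N/A_N\to 1$ in probability, and the quantity whose limiting law must be identified is exactly $M_N(T)$. Thus the theorem is a terminal-time central limit theorem for a sequence of continuous martingales whose total quadratic variation converges to the constant $1$.

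The reduction step is the Dambis--Dubins--Schwarz theorem: on a (possibly enlarged) probability space there is a standard Brownian motion $\beta_N$ with $M_N(t)=\beta_N(\langle M_N\rangle_t)$, and in particular $M_N(T)=\beta_N(\tau_N)$ with $\tau_N:=\langle M_N\rangle_T\to 1$ in probability. It then suffices to show $\beta_N(\tau_N)\Rightarrow\mathcal N(0,1)$, which I would do at the level of characteristic functions by comparing with $\beta_N(1)\sim\mathcal N(0,1)$. For fixed $\delta>0$, on the event $\{|\tau_N-1|\le\delta\}$ one has the pathwise bound $|\beta_N(\tau_N)-\beta_N(1)|\le\omega_N(\delta):=\sup_{|s|\le\delta}|\beta_N(1+s)-\beta_N(1)|$, an estimate that is insensitive to the (generally nontrivial) dependence between $\tau_N$ and $\beta_N$ because it passes to the supremum over the whole interval. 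Since $\beta_N$ is a standard Brownian motion for every $N$, the law of $\omega_N(\delta)$ does not depend on $N$ and $\bE\,\omega_N(\delta)\to 0$ as $\delta\to 0$; combined with $\mathbb P(|\tau_N-1|>\delta)\to 0$ for each fixed $\delta$, this yields $\bigl|\bE\,e^{i\lambda M_N(T)}-e^{-\lambda^2/2}\bigr|\to 0$ for every $\lambda$, and the conclusion follows from L\'{e}vy's continuity theorem.

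The main obstacle is that the hypothesis only furnishes convergence of the quadratic variation at the single terminal time $T$, not convergence of the whole clock $t\mapsto\langle M_N\rangle_t$, so the functional martingale central limit theorem of Jacod and Shiryaev \cite{JSh} does not apply off the shelf. The time change is precisely what removes this difficulty: it collapses the process into its intrinsic clock, after which only the terminal value $\tau_N$ of that clock matters. The remaining technical point, which I expect to require the most care, is the uniform-in-$N$ control of the Brownian increments $\omega_N(\delta)$ together with the bookkeeping needed to extend each $\beta_N$ past $\tau_N$ (so that $\beta_N(1)$ is defined even when $\tau_N<1$); both are handled by the scaling invariance of Brownian motion and the standard enlargement in the Dambis--Dubins--Schwarz construction. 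I note that the continuity of the paths makes any Lindeberg-type jump condition vacuous, which is why no integrability beyond the stated square-integrability of the $f_k$ is needed.
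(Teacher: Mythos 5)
Your argument is correct, but it takes a genuinely different route from the paper, whose entire proof of this theorem is a one-line citation: the statement is identified as a special case of the martingale limit theorems in Jacod and Shiryaev \cite[Theorem VIII.4.17]{JSh} and Liptser and Shiryaev \cite[Theorem 5.5.4(II)]{LSh3}. You instead prove it from scratch: freeze $M_N$ after time $T$, apply Dambis--Dubins--Schwarz on an enlarged space to write $M_N(T)=\beta_N(\tau_N)$ with $\tau_N=Q_N/A_N\to 1$ in probability, and compare $\beta_N(\tau_N)$ with $\beta_N(1)\sim\mathcal{N}(0,1)$ via the pathwise bound by $\omega_N(\delta)$, whose law does not depend on $N$. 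Every step is sound: the enlargement handles the event $\tau_N<1$; $\bE\,\omega_N(\delta)=O(\sqrt{\delta})$ by Brownian scaling (and reversal of increments for the left half of the window); and the dependence between $\tau_N$ and $\beta_N$ is indeed neutralized by passing to the supremum. In fact, once you know $\beta_N(\tau_N)-\beta_N(1)\to 0$ in probability, you can skip the characteristic-function computation entirely and invoke Slutsky's theorem, since $\beta_N(1)$ is exactly standard Gaussian for every $N$. What your route buys is a self-contained, elementary proof that makes transparent why control of the quadratic variation at the single terminal time $T$ suffices, and why no Lindeberg-type condition is needed (path continuity). One correction to your framing, though: your claim that the paper's references do not apply ``off the shelf'' is only half right. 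The functional theorem VIII.4.17 of \cite{JSh} does ask for convergence of the whole clock $t\mapsto\langle M_N\rangle_t$, but the paper's second reference, \cite[Theorem 5.5.4(II)]{LSh3}, is a terminal-time martingale CLT stated under essentially the hypothesis assumed here, so the paper's citation proof is adequate; your argument is best viewed as an elementary substitute for that citation rather than a repair of a gap.
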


\begin{proof}
This is a particular case of a martingale limit theorem; see,
for example Jacod and Shiryaev \cite[Theorem VIII.4.17]{JSh}
or Liptser and Shiryaev \cite[Theorem 5.5.4(II)]{LSh3}.
\end{proof}

\end{document}